\documentclass{amsart}

\usepackage{amsfonts,amssymb,amscd,amsmath,enumerate,verbatim,calc}


\newcommand{\sep}{\operatorname{sep}}
\newcommand{\Eig}{\operatorname{Eig}}

\newcommand{\reg}{\operatorname{reg}}

\newcommand{\bC}{{\boldsymbol C}}

\theoremstyle{plain}
\newtheorem{theorem}{Theorem}

\newtheorem{lemma}[theorem]{Lemma}

\theoremstyle{definition}

\theoremstyle{remark}
\newtheorem{remark}[theorem]{Remark}

\newcounter{hours}\newcounter{minutes}
\newcommand{\printtime}{%
        \setcounter{hours}{\time/60}%
        \setcounter{minutes}{\time-\value{hours}*60}%
        \thehours\,h\ \theminutes\,min}

\begin{document}

\title[Invariants of the dihedral group $D_{2p}$ in characteristic two]
{Invariants of the dihedral group $D_{2p}$ in characteristic two }
\date{\today,\ \printtime}
\author{ Martin Kohls}
\address{Technische Universit\"at M\"unchen \\
 Zentrum Mathematik-M11\\
Boltzmannstrasse 3\\
 85748 Garching, Germany}
\email{kohls@ma.tum.de}

\author{M\"uf\.it Sezer}
\address { Department of Mathematics, Bilkent University,
 Ankara 06800 Turkey}
\email{sezer@fen.bilkent.edu.tr}
\thanks{We thank T\" {u}bitak for
funding a visit of the first author to Bilkent University. Second
author is also partially supported by T\"{u}bitak-Tbag/109T384 and
T\"{u}ba-Gebip/2010. }

\subjclass[2000]{13A50} \keywords{Dihedral groups, separating
invariants, generating invariants}
\begin{abstract}
We consider finite dimensional representations of the dihedral
group $D_{2p}$ over an algebraically closed field of
characteristic two where $p$ is an odd integer and study the
degrees of generating and separating polynomials   in the
corresponding ring of invariants. We give an upper bound for the
degrees of the polynomials in a minimal generating set   that does
not depend on $p$ when the dimension of the representation is
sufficiently large. We also show that $p+1$ is the minimal number such that
the invariants up to that degree always form a separating set. As well,
we give an explicit description of a separating set when $p$ is
prime.
\end{abstract}

 \maketitle

\section{introduction}
Let $V$ be a finite dimensional representation of a group $G$ over
an algebraically closed field $F$. There is an induced action of
$G$ on the algebra of polynomial functions $F[V]$ on $V$ that is
given by $g (f)=f\circ g^{-1}$ for $g \in G$ and $f\in F[V]$. Let
$F[V]^G$ denote the ring of invariant polynomials in $F[V]$. One
of the main goals in invariant theory  is to determine $F[V]^G$ by
computing the generators and the relations. One  may  also study
subsets in $F[V]^G$ that   separate the orbits just as well  as
the full invariant ring. A set $A\subseteq F[V]^G$ is said to be
separating for $V$ if for any pair of vectors $u,w\in V$, we have:
If $f(u)=f(w)$ for all $f\in A$, then $f(u)=f(w)$ for all $f\in
F[V]^G$. There has been a particular rise of interest in
separating invariants following the text book
\cite{{DerksenKemper}}. Over the last decade there has been an
accumulation of evidence that demonstrates that separating sets
are better behaved  and enjoy many properties that make them
easier to obtain. For instance, explicit separating sets are given
for all modular representations of cyclic groups of prime order in
\cite{MR14}. Meanwhile generating sets are known only for very
limited cases for the invariants of these representations. In
addition to attracting attention in their own right separating
invariants can be also used as a stepping stone to build up
generating invariants, see \cite{MR2388087}. For more background
and motivation on separating invariants we direct the reader to
\cite{{DerksenKemper}} and \cite{MR0000001}.

   In this paper we study the invariants of the dihedral
group $D_{2p}$ over a field of characteristic two where $p$ is an
odd integer. The invariants of dihedral groups in characteristic
zero have been worked out by Schmid in \cite{MR1180987} where she
sharpened Noether's bound for non-cyclic groups. Specifically,
among other things,  she proved that the invariant ring $\bC
[V]^{D_{2p}}$ is generated by polynomials of degree at most $p+1$.
Obtaining explicit generators or even sharp degree bounds is much
more difficult when the order of the group is divisible by the
characteristic of the field. The main difficulty is that the
degrees of the generators grow unboundedly as the dimension of the
representation increases. Recently, Symonds \cite{sym} established
that $F[V]^G$ is generated by invariants of degree at most $(\dim
V)(|G|-1)$ for any representation $V$ of any group $G$. In section
1 we improve Symonds' bound considerably for  $D_{2p}$ in
characteristic two. The bound we obtain is about half of $\dim
(V)$ and it does not depend on $p$ if the dimension of the
 part of $V$ where $D_{2p}$ does not act like its factor group ${\mathbf Z}/2{\mathbf Z}$ is large enough. In section 2 we turn our
attention to separating invariants for these representations. The
maximal degree of an element  in the  generating set for the
regular representation provides an upper bound for  the degrees of
separating invariants. We build on this fact and our results in
section 1 to compute the supremum of the degrees of polynomials in
(degreewise minimal) separating sets over all representations.
This resolves a conjecture in \cite{KohlsKraft} positively. Then
we specialize to the case where $p$ is a prime integer and
describe an explicit separating set for all representations of
$D_{2p}$. Our description is recursive and inductively yields a
set that is "nice" in terms of constructive complexity. The  set
consists of invariants that are in the image of the relative
transfer with respect to the subgroup of order $p$ of $D_{2p}$
together with the products of the variables over  certain
summands. Moreover, these polynomials depend on variables from at
most three summands.

\section{Notation and Conventions}\label{SetupSection} In this section we fix the
notation for the rest of the paper. Let $p\ge 3$ be an odd number
and $G:=D_{2p}$ be the dihedral group of order $2p$. We fix
elements $\rho$ and $\sigma$ of order $p$ and $2$ respectively.
Let $H$ denote the subgroup of order $p$ in $G$.  Let $F$ be an
algebraically closed field of characteristic two, and $\lambda\in
F$  a primitive $p$-th root of unity.

\begin{lemma}
For $0\le i\le (p-1)/2$ let $W_i$ denote the two
dimensional module spanned by the vectors $v_1$ and $v_2$ such
that $\rho (v_1)=\lambda^{-i} v_1$, $\rho (v_2)=\lambda^{i}v_2$,
$\sigma (v_1)=v_2$ and $\sigma (v_2)=v_1$. Then the $W_{i}$ together with the
trivial module represent a complete list of indecomposable $D_{2p}$-modules.
\end{lemma}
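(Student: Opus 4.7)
The plan is to decompose any $D_{2p}$-module $V$ via the normal subgroup $H=\langle\rho\rangle$. Since $|H|=p$ is coprime to $\cha F=2$, the restriction $V|_H$ is semisimple, so $V=\bigoplus_{j=0}^{p-1}V_j$ with $V_j=\{v : \rho v=\lambda^j v\}$. The dihedral relation $\sigma\rho\sigma^{-1}=\rho^{-1}$ gives $\sigma(V_j)=V_{-j}$ (indices mod $p$). Hence $V_0$ is a $G$-submodule, and for each $1\les j\les (p-1)/2$ the subspace $V_j\oplus V_{p-j}$ is a $G$-submodule; these pieces split $V$ as a $G$-module, and it suffices to decompose each piece.

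For $j\neq 0$, I would pick any basis $u_1,\ldots,u_k$ of $V_j$; then $\sigma u_1,\ldots,\sigma u_k$ is a basis of $V_{-j}$, and each pair $\langle u_i,\sigma u_i\ra$ is a $G$-submodule on which $\rho$ has eigenvalues $\lambda^j,\lambda^{-j}$ and $\sigma$ swaps the basis vectors. This pair is therefore isomorphic to $W_j$ if $j\les(p-1)/2$, or to $W_{p-j}$ otherwise; note $W_i\cong W_{p-i}$ via the swap of $v_1$ and $v_2$, which is why the list is indexed by $0\les i\les(p-1)/2$. Thus $V_j\oplus V_{-j}$ splits as a direct sum of copies of the appropriate $W_i$.

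For the piece $V_0$, the group $H$ acts trivially, so $V_0$ is an $F[G/H]$-module with $G/H\cong\BZ/2\BZ$. In characteristic two there are exactly two indecomposable $F[\BZ/2\BZ]$-modules: the one-dimensional trivial module and the two-dimensional regular representation, on which $\sigma$ acts by a single Jordan block $\bigl(\begin{smallmatrix}1&1\\0&1\end{smallmatrix}\bigr)$. The latter is precisely $W_0$, since on $W_0$ one sees that $v_1+v_2$ spans a trivial submodule with trivial quotient, and $\sigma$ is not the identity. Hence $V_0$ decomposes into copies of the trivial module and of $W_0$, completing the decomposition of $V$.

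Finally I would record that the listed modules are pairwise non-isomorphic and indecomposable. The trivial module has dimension one while each $W_i$ has dimension two; among the $W_i$ the unordered pair of $\rho$-eigenvalues $\{\lambda^i,\lambda^{-i}\}$ distinguishes $i$ on the range $0\les i\les(p-1)/2$. Indecomposability of $W_i$ for $i\neq 0$ follows because its only $\rho$-stable lines are $Fv_1$ and $Fv_2$, and $\sigma$ swaps them; indecomposability of $W_0$ follows from indecomposability as an $F[\la\sigma\ra]$-module. The only real subtlety is handling $V_0$, where one must invoke the modular structure of $F[\BZ/2\BZ]$; the rest is a direct application of Clifford-type reasoning for the semisimple normal subgroup $H$.
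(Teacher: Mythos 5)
Your proof is correct and follows essentially the same route as the paper: diagonalize the action of $\rho$ (possible since $p$ is odd and $\cha F=2$), use $\sigma\rho\sigma^{-1}=\rho^{-1}$ to pair the eigenspaces $\Eig(\rho,\lambda^{j})$ and $\Eig(\rho,\lambda^{-j})$ into copies of $W_j$, and treat $\Eig(\rho,1)$ separately via the modular representation theory of $\mathbb{Z}/2\mathbb{Z}$. You supply more detail than the paper does (in particular the explicit identification of $W_0$ with the regular representation of $\mathbb{Z}/2\mathbb{Z}$ and the verification of indecomposability and pairwise non-isomorphism), but the underlying argument is the same.
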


\begin{proof}
Let $V$ be any $D_{2p}$-module. As $p$ is odd, the action of $\rho$ is
diagonalizable. For any $k\in{\mathbb Z}$, $\sigma$ induces an
isomorphism of the eigenspaces of $\rho$, $\sigma:
\Eig(\rho,\lambda^{k})\stackrel{\sim}{\rightarrow}
\Eig(\rho,\lambda^{-k})$. Therefore as $D_{2p}$-module, $V$
decomposes into a direct sum of $\Eig(\rho,1)$ and some $W_{i}$'s
with $1\le i\le (p-1)/2$. The action of $\sigma$ on $\Eig(\rho,1)$
decomposes into a direct sum of trivial summands and summands
isomorphic to $W_{0}$.
\end{proof}

Note that $W_{i}$ is faithful if and only if $i$ and $p$ are coprime.
 Let $V$ be a reduced $G$-module, i.e., it does not contain the trivial module as a
summand. Assume that
$$V=\bigoplus_{i=1}^{r}W_{m_i}\oplus\bigoplus_{i=1}^sW_0,$$ where $r, s,
m_i$ are integers such that $r,s\ge 0$ and $0<m_i\le (p-1)/2$ for
$1\le i \le r$.
 By a suitable choice of basis we identify $V=F^{2r+2s}$ with a space of
 $2(r+s)$-tuples
 $\{(a_1, \dots , a_r, b_1, \dots , b_r, c_1,\dots ,c_s, d_1, \dots ,d_s) \mid a_i, b_i, c_j, d_j\in F, \, \; 1\le i\le r, 1\le j\le s \}$
such that the projection $(a_1, \dots , a_r, b_1, \dots , b_r,
c_1,\dots ,c_s, d_1, \dots ,d_s)\rightarrow (a_i, b_i)\in F^2$ is
a $D_{2p}$-equivariant surjection from $V$ to $W_{m_i}$ for $1\le
i\le r$ and the projection $(a_1, \dots , a_r, b_1, \dots , b_r,
c_1,\dots ,c_s, d_1, \dots ,d_s)\rightarrow (c_j, d_j)\in F^2$ is
a $D_{2p}$-equivariant surjection from $V$ to $W_0$ for $1\le j\le
s$. Let $x_1, \dots ,x_r, y_1, \dots ,y_r, z_1, \dots ,z_s, w_1,
\dots ,w_s$ denote the corresponding basis elements in $V^*$, so
we have
$$F[V]=F[x_1,\dots ,x_r, y_1, \dots , y_r, z_1, \dots , z_s, w_1,\dots , w_s],$$
with $\sigma$ interchanging $x_i$ with $y_i$ for $1\le i\le r$ and
$z_j$ with $w_j$ for $1\le j\le s$. The action of $\rho$ is
trivial on $z_j$ and $w_j$ for $1\le j\le s$. Meanwhile $\rho
(x_i)=\lambda ^{m_i}x_i$ and $\rho (y_i)=\lambda ^{-m_i}y_i$ for
$1\le i\le r$.

\section{ Generating invariants}
In this section we give an upper bound for the degree of generators for
$F[V]^G$. Here, $p\ge 3$ is an arbitrary odd number. We continue with the
introduced notation. For $1\le i\le r$ and $1\le j\le s$, let $a_i, b_i, c_j, d_j $ denote non-negative
 integers.
Let $m=x_1^{a_1}\dots x_r^{a_r}y_1^{b_1}\dots y_r^{b_r}z_1^{c_1}\dots z_s^{c_s}w_1^{d_1}\dots w_s^{d_s}$ be
 a monomial in $F[V]$. Since    $\rho$ acts on  a monomial by multiplication with a scalar, all monomials that
 appear in a polynomial in $F[V]^G$ are invariant under the action of $\rho$.
 For a monomial $m$ that is invariant under the action of $\rho$, we let
 $o(m)$ denote its orbit sum, i.e. $o(m)=m$ if $m\in F[V]^{G}$ and
 $o(m)=m+\sigma (m)$ if $m\in F[V]^{\rho}\setminus F[V]^{G}$.
As $\sigma$ permutes the monomials,  we have the following:

\begin{lemma}\label{orbit}
Let $M$ denote the set of monomials of $F[V]$. $F[V]^G$ is spanned
as a vector space by orbit sums of $\rho$-invariant monomials,
i.e. by the set
\[
\{o(m): m\in M^{\rho}\}=\{ m+\sigma (m):\,\, m\in M^{\rho} \} \,\,\cup\,\, \{m:\,\, m\in M^{G}\}.
 \]
\end{lemma}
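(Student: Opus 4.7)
The plan is to decompose an arbitrary $f\in F[V]^G$ first according to $\rho$-eigenspaces and then group the resulting monomials into $\sigma$-orbits.

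First I would observe that since $\rho$ has order $p$, which is coprime to the characteristic, and since $F$ contains a primitive $p$-th root of unity $\lambda$, the element $\rho$ acts diagonally on each monomial of $F[V]$: a monomial $m=x_1^{a_1}\cdots y_r^{b_r}z_1^{c_1}\cdots w_s^{d_s}$ is an $\rho$-eigenvector with eigenvalue $\lambda^{\sum m_i(a_i-b_i)}$. Thus $F[V]=\bigoplus_{k} F[V]_{k}$ where $F[V]_{k}=\Eig(\rho,\lambda^{k})$ is spanned by monomials of weight $k$, and $F[V]^{\rho}=F[V]_{0}$ is spanned by $\rho$-invariant monomials. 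Writing $f=\sum_{k}f_{k}$ with $f_{k}\in F[V]_{k}$ and applying $\rho(f)=f$, each $f_{k}$ is zero except $f_{0}$. Hence every $f\in F[V]^{G}\subseteq F[V]^{\rho}$ is a linear combination of $\rho$-invariant monomials.

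Next I would use that $H=\langle\rho\rangle$ is normal in $G$ (because $\sigma\rho\sigma^{-1}=\rho^{-1}$) to show that $\sigma$ preserves $F[V]^{\rho}$: if $\rho(m)=m$, then $\rho\sigma(m)=\sigma\rho^{-1}(m)=\sigma(m)$. Since $\sigma$ permutes the monomials of $F[V]$, it permutes $M^{\rho}$, and its orbits have size one or two. Partition the monomials appearing in $f$ into these $\sigma$-orbits; $\sigma$-invariance of $f$ forces the coefficients of $m$ and $\sigma(m)$ in $f$ to agree. A fixed monomial $m$ (orbit of size one) satisfies both $\rho(m)=m$ and $\sigma(m)=m$, hence $m\in F[V]^{G}$, and contributes a term of the form $c\cdot o(m)=c\cdot m$. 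A two-element orbit $\{m,\sigma(m)\}$ contributes $c(m+\sigma(m))=c\cdot o(m)$ (this is nonzero in characteristic two since $\sigma(m)\ne m$). Summing over orbits expresses $f$ as a linear combination of the orbit sums $o(m)$ with $m\in M^{\rho}$.

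There is no real obstacle here; the argument is essentially bookkeeping. The only thing to be mindful of is the characteristic-two convention in the definition of $o(m)$: the two cases $\sigma(m)=m$ and $\sigma(m)\ne m$ must be separated so that $o(m)$ is never the zero polynomial, which is exactly how it is defined in the statement.
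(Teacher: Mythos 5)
Your proof is correct and follows exactly the route the paper intends: the paper gives no separate proof, simply noting beforehand that $\rho$ scales each monomial (so every monomial appearing in a $G$-invariant lies in $M^{\rho}$) and that $\sigma$ permutes the monomials, which is precisely the eigenspace decomposition plus $\sigma$-orbit bookkeeping you carry out in detail. Your attention to the two cases $\sigma(m)=m$ versus $\sigma(m)\ne m$ matches the definition of $o(m)$ used in the paper.
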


 Let $f\in F[V]^G_{+}$. We call $f$ expressible if $f$ is in the algebra generated by the invariants whose degrees are strictly smaller than the degree of $f$.

 \begin{lemma}
 Let $m=x_1^{a_1}\dots x_r^{a_r}y_1^{b_1}\dots y_r^{b_r}z_1^{c_1}\dots
 z_s^{c_s}w_1^{d_1}\dots w_s^{d_s}\in M^{\rho}$ such that  $o(m)$ is not expressible. Then $\sum_{1\le j\le s}(c_j+d_j)\le s$.
\end{lemma}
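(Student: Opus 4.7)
\medskip

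\noindent\textbf{Proof plan.}
I prove the contrapositive: assuming $\sum_{j=1}^{s}(c_j+d_j)\ge s+1$, I show $o(m)$ is expressible. The key observation is that for every $j$, because $\rho$ fixes $z_j,w_j$ pointwise and $\sigma$ swaps them, the polynomials
\[
   e_j := z_j+w_j \in F[V]^{G}_{1},\qquad
   p_j := z_j w_j \in F[V]^{G}_{2}
\]
are $G$-invariant of small degree; these will be the invariants I factor out.

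By pigeonhole some index — write $j=1$ — satisfies $c_1+d_1\ge 2$. I split into two cases.

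\emph{Case A: $c_1\ge 1$ and $d_1\ge 1$.} Then $m=p_1\cdot m''$ for some $m''\in M^{\rho}$, and an immediate check using the definition of $o(\cdot)$ — splitting on whether $m''\in M^{G}$, and noting that $m\in M^{G}$ iff $m''\in M^{G}$ — yields $o(m)=p_1\cdot o(m'')$. This exhibits $o(m)$ as a product of invariants whose degrees ($2$ and $\deg(m)-2$) are strictly smaller than $\deg(m)$, so $o(m)$ is expressible.

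\emph{Case B: $c_1\ge 2$ and $d_1=0$ (the case $d_1\ge 2$, $c_1=0$ is symmetric).} Here $m\notin M^{G}$, so $o(m)=m+\sigma(m)$. Set $\tilde m := m/z_1^{c_1}$ and $g := z_1^{c_1-1}\tilde m$, so $m=z_1 g$ and $m^{\dagger}:=w_1 g = z_1^{c_1-1}w_1\tilde m$ has both its $c_1$- and $d_1$-exponents positive. The symmetrization $T(f):=f+\sigma(f)$ is $F[V]^{G}$-linear on $F[V]^{\rho}$, and applying it to $e_1 g$ gives
\[
   e_1\,T(g)\;=\;T\bigl(e_1 g\bigr)\;=\;T(z_1 g)+T(w_1 g)\;=\;T(m)+T(m^{\dagger}),
\]
which in characteristic two rearranges to
\[
  o(m)\;=\;T(m)\;=\;e_1\,T(g)\;+\;T(m^{\dagger}).
\]
The first summand is the product of the degree-$1$ invariant $e_1$ with the invariant $T(g)$ of degree $\deg(m)-1$, so it is expressible. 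For the second summand, either $m^{\dagger}\in M^{G}$ and $T(m^{\dagger})=0$, or $m^{\dagger}\notin M^{G}$ and Case~A applies to $m^{\dagger}$ (both of its $z_1,w_1$-exponents are positive), making $T(m^{\dagger})=o(m^{\dagger})$ expressible. Hence $o(m)$ is expressible.

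The main difficulty is purely bookkeeping: one must (i) justify the identity $T(e_1 g)=e_1 T(g)$ by $F[V]^{G}$-linearity, (ii) keep track of the dichotomy $m\in M^{G}$ versus $m\in M^{\rho}\setminus M^{G}$ each time the symbol $o(\cdot)$ is interpreted, and (iii) check that the invariants appearing on the right-hand sides really do have degree strictly less than $\deg(m)$; the characteristic-two arithmetic enters only through the cancellation $2=0$, which turns $T(z_1 g)+T(w_1 g)=e_1 T(g)$ into the rearrangement above.
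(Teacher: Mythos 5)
Your argument is correct and is essentially the paper's proof: the same pigeonhole selection of an index $j$ with $c_j+d_j\ge 2$, the same factorization through the invariant $z_jw_j$ when both exponents are positive, and in the remaining case your identity $o(m)=e_1T(g)+T(m^{\dagger})$ is precisely the paper's expansion $o(z_j)\,o(m/z_j)=o(m)+\bigl(mw_j/z_j+\sigma(m)z_j/w_j\bigr)$, merely derived via the $F[V]^{G}$-linearity of $T$ instead of by multiplying out. There is no substantive difference in approach (both versions also share the same implicit assumption that $\deg m>2$, which is harmless where the lemma is applied).
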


\begin{proof}
Assume by contradiction that $\sum_{1\le j\le s}(c_j+d_j) > s$.
Pick an integer $1\le j\le s$ such that $c_j+d_j\ge 2$. If both
$c_j$ and $d_j$ are non-zero, then $m$ is divisible by the invariant $z_jw_j$.
It follows that $o(m)$ is divisible
by $z_jw_j$, hence $o(m)$ is expressible. Now assume $c_j\ge 2$ and $d_j=0$. Note that $m/z_j\in M^{\rho}$. We
consider the product
\[o(z_j)o(m/z_j)=(z_j+w_j)(m/z_j+\sigma
(m)/w_j)=o(m)+(mw_j/z_j+\sigma(m)z_{j}/w_{j}).\] As $mw_j/z_j$ is
divisible by $z_jw_j$ (because $m$ is divisible by $z_j^2$), the
invariant $f:=mw_j/z_j+\sigma(m)z_{j}/w_{j}$ is divisible by
$z_{j}w_{j}$. Hence $o(m)=o(z_j)o(m/z_j)+f$ is expressible. The
case $c_j=0$ and $d_j\ge 2$ is handled similarly.
\end{proof}

\begin{theorem}\label{GenerationD2p}
$F[V]^G$ is generated by invariants of degree at most $s+\max
\{r,p\}$.
\end{theorem}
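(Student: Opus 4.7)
I argue by induction on $\deg(m)$, showing that every $\rho$-invariant monomial $m$ with $\deg(m) > s + \max\{r, p\}$ has orbit sum $o(m)$ lying in the subalgebra of $F[V]^G$ generated by invariants of degree at most $s + \max\{r, p\}$. By Lemma~\ref{orbit}, this suffices. Two reductions first bring $m$ into convenient form: if $a_i b_i > 0$ for some $i$, the $G$-invariant $x_i y_i$ divides $m$, so $o(m) = (x_i y_i)\,o(m/(x_i y_i))$ is expressible; hence I may assume $a_i b_i = 0$ for all $i$. The preceding Lemma yields $\sum_j (c_j + d_j) \le s$, forcing $|a| + |b| > \max\{r, p\}$.

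The central tool is the identity, valid in characteristic two for any factorization $m = m_1 m_2$ with $1 \ne m_1 \ne m$ and $m_1, m_2 \in M^\rho$,
\[
o(m_1)\,o(m_2) \;=\; o(m) \;+\; f, \qquad f := m_1 \sigma(m_2) + \sigma(m_1)\, m_2 \;\in\; F[V]^G,
\]
where $f \in F[V]^G$ by direct verification ($\rho$ fixes each $m_i$ and $\sigma$ swaps the two summands). Since $\deg(m_1), \deg(m_2) < \deg(m)$, the expressibility of $o(m)$ is reduced to the expressibility of $f$. The decisive observation is that $f$ acquires a $G$-invariant monomial factor as soon as the divisor $m_1$ is \emph{mixed}, meaning it strictly splits the exponent at some position: if $0 < a'_i < a_i$ for some $i \in I := \{i : a_i > 0\}$, then both $m_1 \sigma(m_2)$ and $\sigma(m_1) m_2$ carry the factor $(x_i y_i)^{\min(a'_i,\, a_i - a'_i)}$, so $x_i y_i \mid f$, and $f/(x_i y_i)$ is a $G$-invariant of degree $< \deg(m)$, expressible by the inductive hypothesis. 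Analogous reasoning applies at positions $i \in J := \{i : b_i > 0\}$.

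It remains to exhibit a mixed $\rho$-invariant divisor $m_1$ of $m$ with $1 \ne m_1 \ne m$. If some $a_i > p$ (resp.\ $b_i > p$), the monomial $m_1 = x_i^p$ (resp.\ $y_i^p$) is $\rho$-invariant (since $p\, m_i \equiv 0 \pmod p$), divides $m$ strictly, and is mixed at position~$i$. Otherwise $a_i, b_i \le p$ for every $i$, and because $|a| + |b| > \max\{r, p\} \ge r$, pigeonhole forces some $a_i \ge 2$ (or $b_i \ge 2$); say $a_1 \ge 2$. Encoding $m$ as a weight multiset $W$ in $\mathbb{Z}/p$---with $a_i$ copies of $m_i$ for $i \in I$ and $b_i$ copies of $-m_i$ for $i \in J$---$\rho$-invariant divisors of $m$ correspond to zero-sum subsequences of $W$, and a mixed divisor splitting position~$1$ comes from a zero-sum subsequence containing exactly one copy of $m_1$. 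This exists provided the subset sums of $W$ minus one copy of $m_1$ (a multiset with $\ge p$ nonzero entries in $\mathbb{Z}/p$) hit $-m_1$: the Cauchy--Davenport theorem secures this for $p$ prime, and for composite $p$ one invokes Kneser's theorem together with the observation that $-m_1$ lies in the cyclic subgroup of $\mathbb{Z}/p$ generated by the weights.

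\textbf{Main obstacle.} The final subset-sum step, especially for composite $p$, is the most delicate: subset sums of the reduced weight multiset may be confined to a proper cyclic subgroup of $\mathbb{Z}/p$, and verifying that $-m_1$ itself is reachable requires tracking the values $\gcd(m_i, p)$ together with sumset bounds in cyclic groups. All other ingredients---the reductions, the expansion identity, and the induction on degree---are routine once a mixed divisor has been produced.
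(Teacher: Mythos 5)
Your overall architecture coincides with the paper's: reduce to orbit sums via Lemma \ref{orbit}, use the preceding lemma to force $\sum_j(c_j+d_j)\le s$, and write $o(m)=o(m_1)o(m_2)+f$ where the error term $f$ becomes divisible by some $x_iy_i$ as soon as the $\rho$-invariant divisor $m_1$ strictly splits an exponent. The paper produces such a ``mixed'' divisor by citing Schmid's result \cite[Proposition 7.7]{MR1180987}, a zero-sum statement over $\mathbb{Z}/p$ valid for arbitrary odd $p$; you instead try to prove the needed zero-sum fact from scratch, and this is exactly where your argument has a genuine gap --- one you partly flag yourself, but which is worse than ``delicate'': the statement you reduce to is false as formulated.

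There are two problems. First, ``the subset sums of $W$ minus one copy of $m_1$ hit $-m_1$'' is vacuous (the whole remaining multiset already sums to $-m_1$ since $W$ is zero-sum), and the subsequence $T$ it produces may absorb the other copies of the position-$1$ weight, so $T\cup\{m_1\}$ need not be proper, need not contain exactly one copy, and need not be mixed at any position. Second, and more seriously, fixing one position of multiplicity $\ge 2$ and insisting on splitting \emph{there} can be impossible when $p$ is composite. Take $p=9$, $V=W_1\oplus W_2\oplus W_3\oplus W_3$ (so $r=4$, $s=0$) and $m=x_1^2y_2x_3^5y_4^2$ of degree $10=\max\{r,p\}+1$; the weight multiset $\{1,1,-2,3,3,3,3,3,-3,-3\}$ is zero-sum. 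If you pick position $1$ (where $a_1=2$), a mixed divisor there would require a sub-sum equal to $8$ from $\{-2,3,3,3,3,3,-3,-3\}$, but every such sum is $\equiv 0$ or $1 \bmod 3$ while $8\equiv 2 \bmod 3$. So no split at position $1$ exists, even though the weights generate all of $\mathbb{Z}/9$; Cauchy--Davenport does not apply ($9$ is not prime) and your Kneser-plus-subgroup observation does not rescue the claim, since subset sums are not the generated subgroup. What is actually true --- and what Schmid's proposition supplies --- is that \emph{some} position of multiplicity $\ge 2$ admits a strict split (here $x_3^3$ works, as $3\cdot 3\equiv 0$ and $0<3<5$). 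Your proof must either let the splitting position vary and prove existence of a good one, or simply invoke \cite[Proposition 7.7]{MR1180987} as the paper does. The remaining ingredients of your write-up (the two reductions, the $a_i>p$ case, the product identity, and the induction on degree) are sound and match the paper.
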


\begin{proof}
By Lemma \ref{orbit} it suffices to show that $o(m)$ is
expressible for any monomial $m=x_1^{a_1}\dots
x_r^{a_r}y_1^{b_1}\dots y_r^{b_r}z_1^{c_1}\dots
z_s^{c_s}w_1^{d_1}\dots w_s^{d_s}\in M^{\rho}$ of degree bigger
than or equal to $s+\max \{r,p\}+1$. Also by the previous lemma we
may assume that $\sum_{1\le j\le s}(c_j+d_j)\le s$.  But then
$t:=\sum_{1\le i\le r}(a_i+b_i)\ge \max\{r,p\}+1\ge r+1$, so we
may take $a_1+b_1\ge 2$. As before, not both of $a_1$ and $b_1$
are non-zero because otherwise $o(m)$ is divisible by the
invariant polynomial $x_1y_1$ and so is expressible. So without
loss of generality we assume that $a_1\ge 2,\, b_{1}=0$. Let
$\kappa_F$ denote the character group of $H$, whose elements are
group homomorphisms from $H$ to $F^*$. Note that $\kappa_F \cong
H$. For $1\le i\le r$, let $\kappa_i\in \kappa_F$ denote the
character corresponding to the action  of $H$ on $x_i$. By
construction the character corresponding to the action on $y_i$ is
$-\kappa_i$. Since $\rho (m)=m$ we have $\sum_{1\le i\le
r}(a_i\kappa_i-b_i\kappa_i)=0$. This is an equation in a cyclic
group of order $p$ that contains at least $t\ge p+1$ (not
distinct)  summands. Since $a_1\ge 2$, \cite[Proposition
7.7]{MR1180987} applies and we get non-negative integers $a_i'\le
a_i$ and $b_i'\le b_i$ for $1\le i\le r$ with $0< a_1'< a_1$
satisfying $\sum_{1\le i\le r}(a_i'\kappa_i-b_i'\kappa_i)=0$.
Hence $m_1:=x_1^{a_1'}\dots x_r^{a_r'}y_1^{b_1'}\dots
y_r^{b_r'}z_1^{c_1}\dots z_s^{c_s}w_1^{d_1}\dots w_s^{d_s}$ is
$\rho$-invariant. Thus $m_2:=m/m_1$ is also $\rho$-invariant.
Since $0<a_1'<a_1$, both $m_1$ and $m_2$ are divisible by $x_1$.
Now  consider
\[
(m_1+\sigma (m_1))(m_2+\sigma
(m_2))=o(m)+(m_1\sigma (m_2)+\sigma(m_{1})m_{2}).
\]
As $m_1\sigma (m_2)$ is divisible by $x_1y_1$, so is
$f:=(m_1\sigma (m_2)+\sigma(m_{1})m_{2})$. It follows that
$o(m)=(m_1+\sigma (m_1))(m_2+\sigma(m_2))+f$ is expressible.
\end{proof}
\begin{remark}\label{remark}
 Assume that $V=W_i$ for some $1\le
i\le (p-1)/2$ such that $i$ and $p$ are coprime and set $x=x_1$ and $y=y_1$. Then by the previous
theorem $F[V]^G$ is generated by invariants of degree at most $p$.
But the monomials in $M^{\rho}$ of degree strictly less than $p$
are all divisible by $xy\in M^G$. Furthermore,  the only monomials in
$M^{\rho}$ of degree $p$ are $x^p$ and $y^p$, so it follows from
Lemma \ref{orbit} that $F[V]^G=F[x^p+y^p, xy]$.
\end{remark}

\section{ Separating invariants}

For a finite group $G$ (and a fixed field $F$), let $\beta_{\sep}(G)$ denote
the smallest number $d$ such that for any representation $V$ of $G$ there
exists a separating set of invariants of degree $\le d$.

\begin{theorem}\label{betaSep}
For an algebraically closed field $F$ of characteristic $2$ and $p\ge 3$ odd, we have $\beta_{\sep}(D_{2p})=p+1$.
\end{theorem}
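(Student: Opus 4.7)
The plan is to establish the equality by proving the two bounds $\beta_{\sep}(D_{2p}) \le p+1$ and $\beta_{\sep}(D_{2p}) \ge p+1$ separately.

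For the upper bound, the strategy is to combine Theorem~\ref{GenerationD2p} with the reduction $\beta_{\sep}(G) \le \beta(V_{\text{reg}})$ flagged in the introduction (a standard fact about separating invariants, cf.\ \cite{KohlsKraft}). First I would decompose $V_{\text{reg}}$ as a $D_{2p}$-module. Restricting to $H = \langle \rho \rangle$ and using that $p$ is coprime to the characteristic, $V_{\text{reg}}$ splits into one-dimensional $\rho$-eigenspaces, each of the $p$ characters of $H$ appearing with multiplicity $[G:H] = 2$. The involution $\sigma$ interchanges the $\lambda^k$- and $\lambda^{-k}$-eigenspaces; pairing characters with their inverses produces, for each $k \in \{1,\ldots,(p-1)/2\}$, a four-dimensional summand splitting as $W_k\oplus W_k$, while the pair of trivial eigenspaces combine into a single copy of $W_0$ (for which one needs the explicit basis $\sum_i \rho^i$, $\sigma \sum_i \rho^i$ to see that $\sigma$ acts as a genuine swap and the resulting $W_0$ is indecomposable in characteristic two). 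Thus $V_{\text{reg}}\cong W_0 \oplus \bigoplus_{k=1}^{(p-1)/2} W_k^{\oplus 2}$, giving $r=p-1$ and $s=1$ in the notation of Section~\ref{SetupSection}, and Theorem~\ref{GenerationD2p} yields $\beta(V_{\text{reg}}) \le s + \max\{r,p\} = p+1$.

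For the lower bound, I would exhibit a small representation and a pair of points in distinct $G$-orbits that no invariant of degree $\le p$ can separate. The natural candidate is $V = W_1 \oplus W_0$ with coordinates $(x_1, y_1, z_1, w_1)$ and the pair
\[
u = (1,0,1,0),\qquad u' = (1,0,0,1).
\]
The first step is to verify $u$ and $u'$ lie in distinct $G$-orbits by listing $\rho^i u$ and $\rho^i \sigma u$ for $i = 0,\ldots,p-1$ and checking that none equals $u'$. The main step is to classify the $\rho$-invariant monomials of degree $\le p$: the condition $a_1 \equiv b_1 \pmod p$ combined with total degree $\le p$ forces any such monomial either to be divisible by $x_1 y_1$, or to lie in $F[z_1, w_1]$, or to equal $x_1^p$ or $y_1^p$. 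Combined with Lemma~\ref{orbit}, this shows the degree-$\le p$ part of $F[V]^G$ sits inside $F[x_1 y_1,\ x_1^p+y_1^p,\ z_1+w_1,\ z_1 w_1]$, and one verifies directly that all four of these generators take the same value on $u$ and $u'$. Hence no invariant of degree $\le p$ separates the two points, giving $\beta_{\sep}(D_{2p}) \ge p+1$; note the degree-$(p+1)$ invariant $x_1^p z_1 + y_1^p w_1$ does separate them, confirming sharpness.

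The main technical point is producing the decomposition of $V_{\text{reg}}$ yielding precisely $r = p-1$ and $s = 1$, since any different count would make Theorem~\ref{GenerationD2p} give a bound exceeding $p+1$; in particular, verifying that the two-dimensional trivial-$\rho$-eigenspace is an indecomposable $W_0$ rather than two trivials requires a careful look at the $\sigma$-action in characteristic two. The lower-bound example itself is short once the right pair $(u,u')$ is identified.
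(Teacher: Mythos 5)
Your proposal is correct and follows essentially the same route as the paper: the upper bound via the decomposition $V_{\reg}\cong W_0\oplus\bigoplus_{k=1}^{(p-1)/2}W_k^{\oplus 2}$ (giving $r=p-1$, $s=1$) combined with Theorem~\ref{GenerationD2p} and the standard reduction to the regular representation, and the lower bound via $W_1\oplus W_0$ with a pair of points that is just the mirror image of the paper's $(0,1,1,0)$ and $(0,1,0,1)$. The only cosmetic difference is that you package all invariants of degree at most $p$ into the subalgebra $F[x_1y_1,\;x_1^p+y_1^p,\;z_1+w_1,\;z_1w_1]$ and evaluate its generators, whereas the paper argues directly with the single $\rho$-invariant monomial that would have to do the separating.
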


Note that in \cite[Proposition 10 and Example 2]{KohlsKraft}, bounds for
$\beta_{\sep}(D_{2p})$ are given only in characteristics $\ne
2$, and the theorem above was conjectured for $p$ an
odd prime. For example by \cite{KohlsKraft}, when $p$ is an odd prime and equals the
characteristic of $F$, then $\beta_{\sep}(D_{2p^{r}})=2p^{r}$ for any $r\ge 1$.

\begin{proof}
We look at the regular representation $V_{\reg}:=FG$, which
decomposes into
$V_{\reg}=\bigoplus_{i=1}^{\frac{p-1}{2}}W_{i}\oplus\bigoplus_{i=1}^{\frac{p-1}{2}}W_{i}
\oplus W_0$. This can be seen by considering the action of $G$ on
the basis of $FG$ consisting of the elements
$v_{k}:=\sum_{j=0}^{p-1}\lambda^{kj}\rho^{j}$ and
$w_{k}:=\sigma(v_{k})$ for $k=0,\ldots,p-1$, where $\lambda$ is a
primitive $p$th root of unity. Then
$\rho(v_{k})=\lambda^{-k}v_{k}, \,\,
\rho(w_{k})=\sigma\rho^{-1}v_{k}=\lambda^{k}w_{k}$, and $\sigma$
interchanges $v_{k}$ and $w_{k}$. It follows that
$\langle
v_{k},w_{k}\rangle\cong W_k$ if $0\le k\le \frac{p-1}{2}$ and
$\langle
v_{k},w_{k}\rangle\cong W_{p-k}$ if $\frac{p+1}{2}\le k\le p-1$.

By Theorem \ref{GenerationD2p}, $F[V_{\reg}]^{G}$ is generated by
invariants of degree $\le 1+\max\{p,2\frac{p-1}{2}\}=1+p$. Hence
$\beta_{\sep}(G)\le p+1$ by \cite[Corollary 3.11]{MR2414957} (see
also \cite[Proposition 3]{KohlsKraft}). Note that for $p$ a prime,
this follows constructively from Theorem \ref{SepD2pTheorem}. To
prove the reverse inequality, consider $V:=W_{1}\oplus W_{0}$. We
use the notation of section \ref{SetupSection}, so
$F[V]=F[x,y,z,w]$ (omitting indices since $r=s=1$) and look at the
points $v_{1}:=(0,1,1,0)$ and $v_{2}:=(0,1,0,1)$ of $V$. They can
be separated by the invariant $zx^{p}+wy^{p}$. Assume they can be
separated by an invariant of degree less or equal than $p$. By Lemma
\ref{orbit}, $F[V]^{G}$ is generated by invariant monomials $m\in
F[V]^{G}$ and orbit sums $m+\sigma(m)$ of $\rho$-invariant
monomials $m\in F[V]^{\rho}$. If such an element separates $v_{1}$
and $v_{2}$, we have $m(v_{1})\ne m(v_{2})$ or $(m+\sigma
m)(v_{1})\ne (m+\sigma m)(v_{2})$ respectively. The latter implies
$m(v_{1})\ne m(v_{2})$ or $\sigma(m)(v_{1})\ne \sigma(m)(v_{2})$.
Replacing $m$ by $\sigma(m)$ if necessary, we thus have a
$\rho$-invariant monomial $m$ separating $v_{1},v_{2}$ of degree
$\le p$. Therefore, $x$ does not appear in $m$, so
$m=y^{a}z^{b}w^{c}$. First assume $a=0$. If $b=c$, then $m$ is
$G$-invariant, and does not separate $v_{1},v_{2}$. If $b\ne c$,
then $m$ is not $G$-invariant, and
$m+\sigma(m)=z^{b}w^{c}+z^{c}w^{b}$ does not separate
$v_{1},v_{2}$. So $a>0$. As $m$ is $\rho$-invariant, we have $a\ge
p$. Since $\deg m\le p$, we have $a=p$ and $b=c=0$. Then
$m+\sigma(m)=y^{p}+x^{p}$ does not separate $v_{1},v_{2}$. We have
a contradiction.
\end{proof}

Theorem \ref{betaSep} gives an upper bound for the degrees of polynomials in a
separating set. In the following, under the additional assumption
that $p>2$ is a prime, we  construct a separating set explicitly.
We use again the notation of section \ref{SetupSection}. We assume
that $V$ is a faithful $G$-module. In particular we have $r\ge 1$. Let $1\le i\le r-1$ be arbitrary. Since the action of
$\rho$ is non-trivial on each of the variables $x_r,y_1, \dots ,
y_{r-1}$ there exists a positive integer $n_i\le p-1$ such that
$x_ry_i^{n_i}$ and $x_rx_i^{p-n_i}$ are invariant under the action
of $\rho$. We thus get invariants
\[f_{i}:=x_ry_i^{n_i}+y_rx_i^{n_i},\quad  g_{i}:=x_rx_i^{p-n_i}+y_ry_i^{p-n_i}\in F[V]^G \quad \text{ for }i=1,\ldots,r-1.\]  For $1\le i\le r-1$
and $1\le j\le s$ we also define
\[f_{i,j}:=x_ry_i^{n_i}z_j+y_rx_i^{n_i}w_j,   \quad h_{j}:=x_{r}^{p}z_{j}+y_{r}^{p}w_{j} \in F[V]^{G}.\]
Set
$V'=\bigoplus_{i=1}^{r-1}W_{m_i}\oplus\bigoplus_{i=1}^sW_0$.

\begin{theorem}\label{SepD2pTheorem}
Let $p>2$ be a prime. Let $S$ be a separating set for $V'$. Then $S$ together with the
set $$T=\{x_ry_r,x_r^p+y_r^p, f_i, g_i, f_{i,j}, h_{j} \mid \; 1\le i\le
r-1,\;  1\le j\le s\}$$ of invariant polynomials  is a separating set for $V$.
\end{theorem}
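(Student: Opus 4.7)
The plan is to show that any two points $u,v\in V$ with matching values on $S\cup T$ lie in the same $G$-orbit; since $F$ is algebraically closed and $G$ is finite, this is equivalent to all of $F[V]^G$ agreeing on $u,v$. I write $u=(\alpha_i,\beta_i,\gamma_j,\delta_j)$ and $v=(\alpha_i',\beta_i',\gamma_j',\delta_j')$ in the notation of Section \ref{SetupSection}.

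First, Remark \ref{remark} says $\{x_ry_r,\,x_r^p+y_r^p\}\subseteq T$ generates $F[W_{m_r}]^G$ and hence separates $W_{m_r}$. Thus the $W_{m_r}$-projections of $u$ and $v$ lie in the same $G$-orbit, and, after replacing $u$ by $g_0\cdot u$ for a suitable $g_0\in G$, I may assume $(\alpha_r,\beta_r)=(\alpha_r',\beta_r')$. If this common value is $(0,0)$ then the $T$-invariants involving $x_r,y_r$ vanish on both points, the hypothesis reduces to $S$ agreeing on the projections $\pi_{V'}(u),\pi_{V'}(v)\in V'$, and $S$ being a separating set for $V'$ finishes this case. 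Otherwise $S$ still furnishes some $g'\in G$ with $g'\cdot\pi_{V'}(u)=\pi_{V'}(v)$, and the remaining goal is to find a (possibly different) element $g\in G$ that both fixes $(\alpha_r,\beta_r)$ and sends $\pi_{V'}(u)$ to $\pi_{V'}(v)$, since then $g\cdot u=v$.

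The core calculation equates $f_i,g_i,f_{i,j},h_j$ on $u$ and $v$ and substitutes the primed coordinates using $g'=\rho^k$ or $g'=\rho^k\sigma$. In characteristic two each resulting identity collapses to a form $(\alpha_r+\beta_r\omega)\cdot P=0$, where $\omega:=\lambda^{m_ikn_i}$ and $P$ is a suitable monomial in $\alpha_i,\beta_i,\gamma_j,\delta_j$. Exploiting that $n_i$ is chosen so that $m_in_i\equiv m_r\pmod{p}$, and that $p$ prime forces $\gcd(n_i,p)=\gcd(p-n_i,p)=1$, each factor admits only a few possibilities: either $\omega=1$ (so $k\equiv 0\pmod{p}$ and $g'\in\{e,\sigma\}$), or $\alpha_r/\beta_r=\lambda^{m_rk}$ is a $p$-th root of unity (so $\mathrm{Stab}_G(\alpha_r,\beta_r)$ contains the unique element $\sigma\rho^{k_0}$ with $\lambda^{m_rk_0}=\beta_r/\alpha_r$), or explicit pointwise relations such as $\alpha_i=\lambda^{m_ik}\beta_i$ and $\gamma_j=\delta_j$ hold on every summand. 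Tracking these in each subcase isolates the element $g\in\{e,\sigma\rho^{k_0}\}\subseteq\mathrm{Stab}_G(\alpha_r,\beta_r)$ that achieves $g\cdot\pi_{V'}(u)=\pi_{V'}(v)$.

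The main obstacle is the case bookkeeping: several parallel subcases (depending on whether $\alpha_r=0$, $\beta_r=0$, or both are nonzero; on whether $\beta_r/\alpha_r$ is a $p$-th root of unity; and on whether $g'$ contains a factor of $\sigma$) have to be handled together, and the hardest branch---where $(\alpha_r,\beta_r)$ has the non-trivial stabilizer $\{e,\sigma\rho^{k_0}\}$ and $g'\ne e$---requires extracting from all four cross-invariant families the precise combination of conditions on every $W_{m_i}$ and $W_0$ summand that certifies $\sigma\rho^{k_0}$ sends $\pi_{V'}(u)$ to $\pi_{V'}(v)$.
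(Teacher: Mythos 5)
Your overall strategy is viable and its reduction step is genuinely different from the paper's: the paper invokes a $G$-equivariant surjection $V\to V'$ and \cite[Theorem 1]{kohlssezer} to reduce to pairs of points that \emph{agree on all of $V'$} and differ only in the $W_{m_r}$-coordinates, whereas you normalize the $W_{m_r}$-component to be equal on both points and then look for an element of the stabilizer of $(\alpha_r,\beta_r)$ carrying $\pi_{V'}(u)$ to $\pi_{V'}(v)$. Both reductions are legitimate, and yours is more self-contained. The correct identification of where primality of $p$ enters (invertibility of $n_i$ and $p-n_i$ modulo $p$) and of the possible stabilizers ($\{e\}$ or $\{e,\rho^{k_0}\sigma\}$) shows the skeleton is right.

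The genuine gap is that the computational core of the theorem --- which is essentially all of its content --- is described but not carried out, and the one concrete claim you make about it is wrong. With $g'=\rho^k$, the identity $f_i(u)=f_i(v)$ reads
\[
(1+\omega)\,\alpha_r\beta_i^{n_i}+(1+\omega^{-1})\,\beta_r\alpha_i^{n_i}=0,
\qquad \omega=\lambda^{km_in_i},
\]
which in characteristic two factors as $(1+\omega)\bigl(\alpha_r\beta_i^{n_i}+\omega^{-1}\beta_r\alpha_i^{n_i}\bigr)=0$: the second factor is a \emph{binomial}, not a monomial, and the whole expression is not of the form $(\alpha_r+\beta_r\omega)\cdot P$ as you assert. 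Consequently your claimed trichotomy ($\omega=1$, or $\alpha_r/\beta_r$ a $p$-th root of unity, or pointwise relations on every summand) has not actually been derived; extracting it requires exactly the vanishing analysis the paper performs ($a_i\neq0$ iff $b_i\neq0$, $a_i^p=b_i^p$, $c_j=d_j$, and the congruences $e_i\equiv hm_i$ tying all summands to a \emph{single} exponent $h$). You explicitly defer the hardest branch --- nontrivial stabilizer, $g'\neq e$ --- where one must check that the conditions forced by $f_i,g_i,f_{i,j},h_j$ on \emph{every} $W_{m_i}$ and $W_0$ summand are simultaneously certified by the one candidate element $\rho^{k_0}\sigma$. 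Until that is written out (it occupies roughly two pages in the paper), the argument is a plan rather than a proof.
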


Note that a separating set for $\bigoplus_{i=1}^s W_0$ is given in \cite{MR14}.

\begin{proof}
We have a  surjection $V\rightarrow V':(a_1, \dots , a_r, b_1,
\dots , b_r, c_1,\dots ,c_s, d_1, \dots, d_s)\rightarrow (a_1,
\dots , a_{r-1}, b_1, \dots , b_{r-1}, c_1,\dots ,c_s, d_1, \dots
,d_s)$ which is $G$-equivariant. Therefore by \cite[Theorem
1]{kohlssezer} it suffices to show that the polynomials in $T$
separate any pair of vectors $v_1$ and $v_2$ in different
$G$-orbits that agree everywhere except $r$-th and $2r$-th
coordinates. So we take $v_1=(a_1, \dots , a_r, b_1, \dots , b_r,
c_1,\dots ,c_s, d_1, \dots, d_s)$ and $v_2=(a_1, \dots ,
a_{r-1},a_r', b_1, \dots ,b_{r-1}, b'_r, c_1,\dots ,c_s, d_1,
\dots, d_s)$. Assume by way of contradiction that no polynomial in
$T$ separates $v_1$ and $v_2$. Since
$\{x_ry_r,x_r^p+y_r^p\}\subseteq T$ is a separating set for
$W_{m_r}$ by Remark \ref{remark}, we may further take   that
$(a_r,b_r)$ and $(a_r',b_r')$ are in the same $G$-orbit. Consequently,  there are
two cases.

First we assume that there exists an integer $t$ such
that $(a_r',b_r')=\rho^t (a_r,b_r)$. Hence
$a_r'=\lambda^{-tm_r}a_r$ and $b_r'=\lambda^{tm_r}b_r$. Set
$c:=\lambda^{-tm_r}$. Notice that $a_r$ and $b_r$ can not be zero
simultaneously because otherwise $v_1=v_2$. Without loss of
generality we take $a_r\neq 0$. Also if $a_i=b_i=0$ for all $1\le
i\le r-1$ then we have $\rho^t (v_1)=v_2$, hence $r>1$ and there is  an index
$1\le q \le r-1$ such that at least one of  $a_q$ or $b_q$ is
non-zero. We show in fact both $a_q$ and $b_q$  are non-zero together with
$b_r$.  First assume that $a_q\neq 0$. If one of $b_q$ or $b_r$ is
zero, then $g_q(v_1)=a_ra_q^{p-n_{q}}$ and
$g_q(v_2)=ca_ra_q^{p-n_{q}}$. This yields a contradiction because
$g_q(v_1)=g_q(v_2)$. Next assume that  $b_q\neq 0$. If one of
$a_q$ or $b_r$  is zero then $f_q(v_1)=a_rb_q^{n_q}$ and
$f_q(v_2)=ca_rb_q^{n_q}$, yielding a contradiction again. In fact,
applying the same argument using the invariant $g_i$ (or $f_i$)
shows that for $1\le i\le r-1$ we have: $a_i\neq 0$ if and only if
$b_i\neq 0 $. We claim that   $a_i^p=b_i^p$  for $1\le i\le r-1$. Clearly
we may assume $a_i\neq 0$. From $f_i(v_1)=f_i(v_2)$ we get
$(1+c)a_rb_i^{n_i}=(1+c^{-1})b_ra_i^{n_i}$. Similarly from
$g_i(v_1)=g_i(v_2)$ we have
$(1+c)a_ra_i^{p-n_i}=(1+c^{-1})b_rb_i^{p-n_i}$. It follows that
$$c^{-1}=\frac{a_rb_i^{n_i}}{b_ra_i^{n_i}}=\frac{a_ra_i^{p-n_i}}{b_rb_i^{p-n_i}}.$$
This establishes the claim.  For $1\le i\le r-1$, let $e_i$ denote the smallest non-negative integer such that
$b_i=\lambda^{e_i} a_i$. We also have $b_r=c\lambda^{e_in_i}a_r$ provided $a_i\neq 0$.
 We now show that $c_j=d_j$ for all $1\le j\le s$. From
$f_{q,j}(v_1)=f_{q,j}(v_2)$ we have
$c_ja_rb_q^{n_q}+d_jb_ra_q^{n_q}=cc_ja_rb_q^{n_q}+c^{-1}d_jb_ra_q^{n_q}$.
Putting $b_q=\lambda^{e_q}a_q$ and $b_r=c\lambda^{e_qn_q}a_r$ we
get
$c_ja_r\lambda^{e_qn_q}a_q^{n_q}+d_jc\lambda^{e_qn_q}a_ra_q^{n_q}=cc_ja_r\lambda^{e_qn_q}a_q^{n_q}+c^{-1}d_jca_r\lambda^{e_qn_q}a_q^{n_q}$
which gives $c_j+cd_j=cc_j+d_j$. This implies $c_j=d_j$ as desired
because $1+c\neq 0$ . We now have $v_1=(a_1, \dots , a_r,
\lambda^{e_1}a_1, \dots , \lambda^{e_{r-1}}a_{r-1},
c\lambda^{e_qn_q}a_r, c_1,\dots ,c_s, c_1, \dots, c_s)$ and
$v_2=(a_1, \dots , a_{r-1},ca_r, \lambda^{e_1}a_1, \dots ,
\lambda^{e_{r-1}}a_{r-1}, \lambda^{e_qn_q}a_r, c_1,\dots ,c_s,
c_1, \dots, c_s)$. Since $0< m_r<p$, there exists an integer $0\le
h\le p-1$ such that $-hm_r+e_qn_q\equiv 0 \; \mod p$. We obtain a
contradiction by showing that $\rho^h \sigma (v_1)=v_2$. Since the
action of $\rho$ on the last $2s$ coordinates is trivial it
suffices to show that $\lambda^{-hm_i}b_i=a_i$ for $1\le i\le r-1$ and $\lambda^{-hm_r}b_r=ca_r$.
Hence we need to show $-hm_i+e_i\equiv 0 \; \mod p$ for $1\le i\le
r-1$ when $a_{i}\ne 0$, and $-hm_r+e_qn_q \equiv 0 \; \mod p$. The second equality
follows by the choice of $h$. So assume that $1\le i \le r-1$ and
$a_i\neq 0$. We have $m_r-n_im_i\equiv 0 \; \mod p$ because
$x_ry_i^{n_i}$ is invariant under the action of $\rho$. It follows
that $e_qn_q-hn_im_i\equiv 0 \; \mod p$. But since $e_in_i\equiv
e_qn_q$ (as $b_{r}=c\lambda^{e_{i}n_{i}}a_{r}=c\lambda^{e_{q}n_{q}}a_{r}$) we have $n_i(e_i-hm_i)\equiv 0 \; \mod p$. Since $n_i$ is
non-zero modulo $p$ we have $e_i-hm_i\equiv 0 \; \mod p$ as
desired.

Next we consider the case  $(a_r',b_r')=\rho^t \sigma (a_r,b_r)$
for some integer $t$. Hence $a_r'=\lambda^{-tm_r}b_r$ and
$b_r'=\lambda^{tm_r}a_r$. Set $c:=\lambda^{-tm_r}$. As in the first
case one of $a_r$ or $b_r$ is non-zero, so without loss of generality we take
$a_r\neq 0$.
As $h_{j}(v_{1})=h_{j}(v_{2})$ for $1\le j\le s$, we get
$(a_{r}^{p}+a_{r}'^{p})c_{j}=(b_{r}^{p}+b_{r}'^{p})d_{j}$, which implies
$(a_{r}^{p}+b_{r}^{p})c_{j}=(a_{r}^{p}+b_{r}^{p})d_{j}$. If
$a_{r}^{p}=b_{r}^{p}$, we have $b_{r}=\lambda^{l}a_{r}$ for some
$l$. Then we have
$(a_{r}',b_{r}')=(\lambda^{-tm_{r}+l}a_{r},\lambda^{tm_r-l}b_{r})\in \langle
\rho \rangle \cdot (a_{r},b_{r})$, so we are again in the first case.
Therefore we can assume $a_{r}^{p}\ne b_{r}^{p}$, and we get
$c_{j}=d_{j}$ for all $1\le j\le s$.
Now, if $a_i=b_i=0$ for all $1\le i\le r-1$, then $v_{2}=\rho^{t}\sigma(v_{1})$. Hence $r>1$ and there is  an index $1\le q \le
r-1$ such that at least one of  $a_q$ or $b_q$ is non-zero. Let
$1\le i\le r-1$. From $f_i(v_1)=f_i(v_2)$ we get
$a_rb_i^{n_i}+b_ra_i^{n_i}=cb_rb_i^{n_i}+c^{-1}a_ra_i^{n_i}$ and
so $a_i^{n_i}(c^{-1}a_r+b_r)=b_i^{n_i}(a_r+cb_r)$. Note that
$c^{-1}a_r+b_r\neq 0$ because otherwise $v_1=v_2$. So we have
$a_i^{n_i}=cb_i^{n_i}$. Along the same lines, from
$g_i(v_1)=g_i(v_2)$ we obtain $b_i^{p-n_i}=ca_i^{p-n_i}$. It
follows that $a_i^p=b_i^p$. As before, for $1\le i\le r-1$ let  $e_i$ denote
the smallest non-negative integer such that $b_i=\lambda^{e_i}a_i$. We also
have $c=\lambda^{-n_ie_i}$ for all $1\le i\le r-1$ with  $a_i\neq 0$.
We have $v_1=(a_1, \dots , a_r, \lambda^{e_1}a_1,
\dots , \lambda^{e_{r-1}}a_{r-1}, b_r, c_1,\dots
,c_s, c_1, \dots, c_s)$ and
 $v_2=(a_1, \dots a_{r-1}, cb_r, \lambda^{e_1}a_1,
\dots , \lambda^{e_{r-1}}a_{r-1}, c^{-1}a_r, c_1,\dots
,c_s, c_1, \dots, c_s)$. We finish the proof by demonstrating that $v_1$ and $v_2$ are in the same orbit.
Since
$0< m_r<p$, there exists an integer $0\le h\le p-1$ such that $\lambda^{-hm_r}=c$. Equivalently, $-hm_r+e_qn_q\equiv 0 \; \mod p$. We claim that $\rho^h \sigma (v_1)=v_2$. Since $c_j=d_j$ for $1\le j\le s$ and the action of $\rho$ on the last $2s$ coordinates is trivial we just need to show that
$\lambda^{-hm_i}b_i=a_i$ for $1\le i\le r-1$ and $\lambda^{-hm_r}b_r=cb_{r}$.
 Since the last equation is taken care of by construction we just need to show
 $-hm_i+e_i \equiv 0 \; \mod p$ for $1\le i\le r-1$ when $a_{i}\ne 0$. We get
 $e_{i}n_{i}\equiv e_{q}n_{q}$ from
 $c=\lambda^{-e_{i}n_{i}}=\lambda^{-e_{q}n_{q}}$. Now the proof can be
 finished by exactly the same argument as in the first case.
\end{proof}

\bibliographystyle{plain}
\bibliography{Bibliography_Version_10}

\begin{thebibliography}{1}

\bibitem{DerksenKemper}
Harm Derksen and Gregor Kemper.
\newblock {\em Computational invariant theory}.
\newblock Invariant Theory and Algebraic Transformation Groups, I.
  Springer-Verlag, Berlin, 2002.
\newblock Encyclopaedia of Mathematical Sciences, 130.

\bibitem{MR2388087}
Harm Derksen and Gregor Kemper.
\newblock Computing invariants of algebraic groups in arbitrary characteristic.
\newblock {\em Adv. Math.}, 217(5):2089--2129, 2008.

\bibitem{MR2414957}
Jan Draisma, Gregor Kemper, and David Wehlau.
\newblock Polarization of separating invariants.
\newblock {\em Canad. J. Math.}, 60(3):556--571, 2008.

\bibitem{MR0000001}
G.~Kemper.
\newblock Separating invariants.
\newblock {\em J. Symbolic Comput.}, 44:1212--1222, 2009.

\bibitem{KohlsKraft}
Martin Kohls and Hanspeter Kraft.
\newblock On degree bounds for separating invariants.
\newblock {\em Mathematical Research Letters, to appear}, 2010.

\bibitem{kohlssezer}
Martin Kohls and M\"{u}fit Sezer.
\newblock Separating invariants for the klein four group and the cyclic groups.
\newblock {\em arXiv:1007.5197}, 2010.

\bibitem{MR1180987}
Barbara~J. Schmid.
\newblock Finite groups and invariant theory.
\newblock In {\em Topics in invariant theory ({P}aris, 1989/1990)}, volume 1478
  of {\em Lecture Notes in Math.}, pages 35--66. Springer, Berlin, 1991.

\bibitem{MR14}
M{\"u}fit Sezer.
\newblock Constructing modular separating invariants.
\newblock {\em J. Algebra}, 322(11):4099--4104, 2009.

\bibitem{sym}
P.~Symonds.
\newblock On the {C}astelnuovo-{M}umford regularity of rings of polynomial
  invariants.
\newblock {\em Preprint, available at
  http://www.maths.manchester.ac.uk/~pas/preprints/}, 2009.

\end{thebibliography}
 \end{document}